\newtheorem{theorem}{Theorem}
\newtheorem{corollary}{Corollary}
\newtheorem{claim}{Claim}
\newtheorem{example}{Example}
\newtheorem{definition}{Definition}
\title{Signatures of Branched Coverings\footnote{To be submitted for publication in proceedings of 6th European Congress of Mathematics}.}
\author{Yuri Burda, Askold Khovanskii}
\begin{document}

\maketitle

\begin{abstract}

In this paper we deal with branched coverings over the complement to finitely many exceptional points on the Riemann sphere having the property that the local monodromy around each of the branching points is of finite order. To such a covering we assign its \textit{signature}, i.e. the set of its exceptional and branching points together with the orders of local monodromy operators around the branching points. 

What can be said about the monodromy group of a branched covering if its signature is known? It seems at first that the answer is nothing or next to nothing. Indeed, generically it is so. However there is a (small) list of signatures of \textit{elliptic} and \textit{parabolic} types, for which the monodromy group can be described completely, or at least determined up to an abelian factor. This appendix is devoted to investigation of these signatures. For all these signatures (with one exception) the corresponding monodromy groups turn out to be solvable.

Linear differential equations of Fuchs type related to these signatures are solvable in quadratures (in the case of elliptic signatures --- in algebraic functions). A well-known example of this type is provided by Euler differential equations, which can be reduced to linear differential equations with constant coefficients.

The algebraic functions related to all (but one) of these signatures are expressible in radicals. A simple example of this kind is provided by the possibility to express the inverse of a Chebyshev polynomial in radicals. Another example of this kind is provided by functions related to division theorems for the argument of elliptic functions. Such functions play a central role in the work [1] of Ritt.

\end{abstract}

\section{Coverings with a Given Signature}

\subsection{Definitions and Examples}

The mapping $\pi:Y\rightarrow S$ of a connected Riemann surface $Y$ to the Riemann sphere $S$ is called \textit{ admissible}, if the following conditions hold:

1) $\pi(Y)=S\setminus B$, where $B=\{b_{n+1},\dots b_{n+k}\}$ is the \textit{ exceptional} set;

2) $\pi:Y\rightarrow S\setminus B$ is a branched covering with the branching locus $A=\{a_1,\dots,a_n\}$;

3) For $1\leq j\leq n$ the order of the local monodromy operator at the point $a_j$ is a finite number $r_j>1$ (the local monodromy operator at point $x$ is the element of the monodromy group, defined up to conjugation, that corresponds to a small path going around the point $x$). We don't assume anything about the order of local monodromy operators at points $b_j$  (i.e. points $b_j\in B$ can be branching points of infinite order).

\begin{definition}The \textit{signature} of an admissible mapping  $\pi:Y\rightarrow S$ is the triple $(A,\,B,\, R)$, where  $R=\{r_1,\dots,r_n,\infty,\dots,\infty\}$ is the \textit{set of orders}. If $B=\emptyset$, we don't mention $B$ in the signature. We call an admissible mapping with a given signature a \textit{covering with a given signature}.
\end{definition}

We assume that \textit{the inequality $n+k\geq2$ holds for the signature $(A,\,B,\,R)$}. We also assume that \textit{for the signature $(A,\,R)$ with $\# A=2$ and $R=(k,n)$ the equality $k=n$ holds}. If a signature does not satisfy these conditions, then any covering with such signature is either trivial or does not exits.

\begin{example}Consider an algebraic function with the branching locus $A=\{a_1,\dots,a_n\}$. Suppose that the local monodromy operator at the point $a_i\in A$  has order $r_i$. Then the Riemann surface of this function is a covering with signature $(A,R)$, where $R=\{r_1,\dots,r_n\}$.
 \end{example}
 
\begin{example}Consider a linear differential equation of Fuchs type with the set of singular points $A\cup B$, where $A=\{a_1,\dots,a_n\}$, $B=\{b_{n+1}\dots, b_{n+k}\}$. Suppose that the local monodromy operator has a finite order $r_i$ at each of the points $a_i\in A$ and an infinite order at each of the points $b_j\in B$. Then the Riemann surface of a generic solution of this differential equation is a covering with signature $(A,B,R)$, where $R=\{ r_1,\dots,r_n,\infty,\dots,\infty \}$.
\end{example}

We will see below that for all but one of the exceptional signatures the set $A\cup B$ contains two or three points.
\begin{claim} If $\# A\cup B\leq 3$ then up to an automorphism of the sphere $S$, the signature $(A,\,B,\,R)$ is defined by the set of orders $R$.
\end{claim}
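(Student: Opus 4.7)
The plan is to invoke the classical fact that the automorphism group of the Riemann sphere $S$ is the M\"obius group, which acts sharply 3-transitively on ordered triples of distinct points. So I will set up a bijection between the underlying point sets of two signatures with the same $R$, respecting the partition into $A$-points (finite order) and $B$-points (order $\infty$) and the assignment of orders, and then realize this bijection by an element of $\mathrm{Aut}(S)$.

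Concretely, suppose $(A,B,R)$ and $(A',B',R')$ are two signatures with $R=R'$ and $\#(A\cup B)=\#(A'\cup B')\leq 3$. I would split into cases by $n+k\in\{2,3\}$. If $n+k=3$, I pick any bijection $\varphi\colon A\cup B\to A'\cup B'$ that sends each $a_j$ to the point of $A'$ with the same order $r_j$ and each $b_j$ to a point of $B'$; such a bijection exists because $R=R'$ as an ordered list. By 3-transitivity of the M\"obius group there is a unique automorphism of $S$ realizing $\varphi$, and this is the desired automorphism. If $n+k=2$, I use the analogous argument: $\mathrm{Aut}(S)$ acts transitively on ordered pairs of distinct points of $S$ (in fact with a one-parameter stabilizer), so any bijection $A\cup B\to A'\cup B'$ respecting the labels is induced by some M\"obius transformation.

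The only subtle point is verifying that a label-respecting bijection $\varphi$ exists. For $n+k=3$ and for the mixed case $n=k=1$ with $n+k=2$, the partition into finite- and infinite-order points together with the list $R$ already pins down the map on $A$-points up to permutations of points with equal orders, and such permutations are harmless. The single case requiring the hypothesis built into the definition is $\#A=2$, $B=\emptyset$, $R=(k,n)$: here the two finite orders must coincide ($k=n$) for \emph{any} bijection to preserve orders, and this is exactly the standing assumption stated just before the claim. In all remaining sub-cases ($\#A=1,\#B=1$ or $\#B=2,A=\emptyset$), the bijection is forced or the points are interchangeable, so existence of $\varphi$ is immediate.

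I do not expect a real obstacle: the whole content is 3-transitivity of $\mathrm{Aut}(S)$ combined with the bookkeeping above. The most delicate step is merely checking that the two excluded degenerate situations flagged after the definition (the inequality $n+k\geq 2$ and the $k=n$ condition for $\#A=2$) are precisely what is needed to guarantee that a compatible bijection $\varphi$ exists; once this is in place, the M\"obius transformation realizing $\varphi$ does the job.
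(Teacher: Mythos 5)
Your argument is correct and rests on the same key fact as the paper's proof, namely the $3$-transitivity of the automorphism group of the Riemann sphere; the paper states only this one sentence and omits the bookkeeping entirely. Your additional care in constructing the label-respecting bijection and in noting that the standing assumption $k=n$ for $\#A=2$ is exactly what makes such a bijection exist is a faithful elaboration rather than a different route.
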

\begin{proof} There exists an automorphism of the sphere that takes any given triple of points to any other triple.
\end{proof}

\subsection{Classification}

The covering $\pi:Z\rightarrow S$ with signature $(A,\,B,\,R)$ is called \textit{universal} if: 1) the surface $Z$ is simply-connected, 2) the multiplicity of the mapping $\pi$ at points $c_j\in \pi^{-1}(a_k)$ is $r_k$. 

The universal covering $\pi:Z\rightarrow  S$ with signature $(A,\,B,\,R)$ has the following universal property.

 \begin{theorem} Let $\pi_1: Y\rightarrow  S$ be a covering with signature $(A,\,B,\,R)$ and  $z_0\in Z$, $y_0\in Y$ be points with $\pi (z_0)=\pi_1(y_0)=x_0 \notin A$. Then there exists a mapping $\pi_2:Z\rightarrow Y$ such that $\pi=\pi_1\circ\pi_2$ and $\pi_2 (z_0)=y_0$.
\end{theorem}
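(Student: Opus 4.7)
The plan is to construct the lift first on the complement of the branching and exceptional sets by invoking the standard covering-space lifting criterion, and then to extend holomorphically across the branch points of $Z$. Set $S^*=S\setminus(A\cup B)$, $Z^*=\pi^{-1}(S^*)$, $Y^*=\pi_1^{-1}(S^*)$. Both restrictions $\pi|_{Z^*}\colon Z^*\to S^*$ and $\pi_1|_{Y^*}\colon Y^*\to S^*$ are unramified topological coverings over a locally simply connected base, so a continuous (hence holomorphic) lift $\pi_2^*\colon Z^*\to Y^*$ with $\pi_2^*(z_0)=y_0$ will exist as soon as the subgroup inclusion
\[
\pi_*\pi_1(Z^*,z_0)\;\subseteq\;(\pi_1)_*\pi_1(Y^*,y_0)
\]
in $\pi_1(S^*,x_0)$ is verified.

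To check this inclusion, let $\gamma_j\in\pi_1(S^*,x_0)$ be represented by a small loop around $a_j$. Because the local monodromy of $\pi_1$ at $a_j$ has order $r_j$, the element $\gamma_j^{r_j}$ acts trivially on the whole fiber $\pi_1^{-1}(x_0)$; in particular it fixes $y_0$, so $\gamma_j^{r_j}\in(\pi_1)_*\pi_1(Y^*,y_0)$, and since conjugates of $\gamma_j^{r_j}$ still act trivially on the fiber, the normal closure $N=\langle\langle \gamma_1^{r_1},\ldots,\gamma_n^{r_n}\rangle\rangle$ in $\pi_1(S^*,x_0)$ is contained in $(\pi_1)_*\pi_1(Y^*,y_0)$. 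On the other hand, since $Z$ is simply connected and $\pi^{-1}(A)$ is discrete in $Z$, the group $\pi_1(Z^*,z_0)$ is generated by small loops around the points of $\pi^{-1}(A)$. The local model $w\mapsto w^{r_k}$ of $\pi$ near a point $c\in\pi^{-1}(a_k)$ shows that a small loop around $c$ maps via $\pi$ to a conjugate of $\gamma_k^{r_k}$. Hence $\pi_*\pi_1(Z^*,z_0)\subseteq N\subseteq(\pi_1)_*\pi_1(Y^*,y_0)$, and the lifting criterion produces $\pi_2^*$.

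It remains to extend $\pi_2^*$ across each point $c\in\pi^{-1}(A)$. Choose a small disk $U\subset Z$ around $c$ with $U\cap\pi^{-1}(A)=\{c\}$. The connected set $\pi_2^*(U\setminus\{c\})$ lies in a single connected component of the preimage under $\pi_1$ of a punctured disk around $\pi(c)=a_j$; this component is itself a punctured disk around some $c'\in\pi_1^{-1}(a_j)$, with local multiplicity $s$ equal to the cycle length of the permutation $\gamma_j$ at the fiber point determined by $c'$. Since $s$ divides the order $r_j$ of $\gamma_j$, in suitable local coordinates the map $\pi_2^*$ takes the form $w\mapsto w^{r_j/s}$, which extends holomorphically by setting $\pi_2(c)=c'$. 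Assembling these extensions gives $\pi_2\colon Z\to Y$, and the identity $\pi=\pi_1\circ\pi_2$ extends from $Z^*$ to $Z$ by continuity. The main obstacle is the second paragraph: translating the abstract condition of ``same signature'' into the concrete algebraic inclusion of subgroups of $\pi_1(S^*,x_0)$. Once that is done, the extension across the finitely many branch points is routine Riemann-surface analysis.
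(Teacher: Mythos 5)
Your proof is correct and follows essentially the same route as the paper's much terser argument: both reduce to the covering-space lifting criterion over $S\setminus(A\cup B)$, using the fact that the fundamental group of the simply connected surface $Z$ punctured at $\pi^{-1}(A)$ is generated by lassos around those points, whose images under $\pi$ are conjugates of $\gamma_k^{r_k}$ and hence lift to closed loops in $Y$. Your write-up additionally makes explicit the holomorphic extension of the lift across the finitely ramified points, a step the paper leaves implicit in ``the theorem follows.''
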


\begin{proof} Let $C=\pi^{-1}(A)\subset Z$. Since the surface $Z$ is simply-connected, the fundamental group of the complement $Z\setminus C$ is generated by the curves $\tau_j$ going around the points $c_j\in C$. Suppose $\pi(c_j)=a_k$. By definition the mapping $\pi$ has multiplicity  $r_k$ at point $c_j$. Hence the image of the curve $\tau _j$ under the projection $\pi_1$ goes around the point $a_k$ exactly $r_k$ times. By definition of signature, the lift of the curve $\pi(\gamma)$ to the surface $Y$ based at the point $y_0$ is a closed curve. The theorem follows.
\end{proof}

Let $\pi_1 :Y\rightarrow S$ be a covering with signature $(A,\,B,\,R)$. Fix a point $x_0\in S \setminus (A\cup B)$. A branched covering $\pi :Y\rightarrow S\setminus A$ corresponds to a conjugacy class of subgroups of the fundamental group of the set $S\setminus (A\cup B)$ with base point $x_0$. To the intersection of these subgroups corresponds a branched covering  $\pi_{nor}:Y_{nor}\rightarrow S\setminus A$. This covering will be called the \textit{normalization}  of the original covering.

The following theorem obviously holds.

\begin{theorem} The normalization of a covering with a given signature $(A,B,R)$ is a covering with the same signature and isomorphic monodromy group. If $\pi_{nor}(c_j)=a_k$, then the multiplicity of the mapping $\pi_{nor}$ at point $c_j$ is $r_k$.
\end{theorem}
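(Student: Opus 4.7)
The plan is to interpret the normalization group-theoretically and then read off the signature, monodromy, and multiplicity data from that description. Set $G=\pi_1(S\setminus(A\cup B),x_0)$ and let $H\subset G$ be a subgroup in the conjugacy class corresponding to the original covering $\pi_1:Y\to S$. By the preceding paragraph the normalization $\pi_{nor}$ corresponds to $N=\bigcap_{g\in G}gHg^{-1}$, the normal core of $H$, i.e.\ the largest normal subgroup of $G$ contained in $H$.

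First I would address the monodromy groups. For any covering corresponding to a subgroup $K\subset G$ the monodromy group is canonically isomorphic to $G$ modulo the normal core of $K$. Applied to $\pi_1$ this yields $G/N$. Applied to $\pi_{nor}$, whose associated subgroup $N$ is already normal and hence is its own normal core, it again yields $G/N$. So the two monodromy groups are canonically isomorphic.

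Next I would verify that $\pi_{nor}$ has signature $(A,B,R)$. For each $a_k\in A$ pick a small loop $\gamma_k\subset S\setminus(A\cup B)$ winding once around $a_k$, and analogously pick small loops around each $b_j\in B$. The order of a local monodromy at $a_k$ (respectively $b_j$) is by definition the order of the image of the corresponding loop class in the monodromy group. Since the two coverings share the monodromy group $G/N$, these orders agree: they are $r_k$ at each $a_k$ and $\infty$ at each $b_j$. In particular the orders over $A$ are finite, so the unramified covering defined by $N$ over $S\setminus(A\cup B)$ extends across the preimages of $A$ to a branched covering $\pi_{nor}:Y_{nor}\to S\setminus A$ whose signature is $(A,B,R)$.

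Finally, for the multiplicity statement I would invoke regularity: because $N$ is normal, $\pi_{nor}$ is a Galois covering whose deck transformation group $G/N$ acts transitively on each fiber. The stabilizer in $G/N$ of a point $c_j\in\pi_{nor}^{-1}(a_k)$ is cyclic and generated by the local monodromy at $c_j$, and in local coordinates $\pi_{nor}$ is modelled by $z\mapsto z^m$ with $m$ equal to the order of that stabilizer, namely $r_k$. The main obstacle I expect is not any hard step but rather the careful bookkeeping that ties ``order of the image of $[\gamma_k]$ in the abstract monodromy group'' to ``multiplicity at a chosen preimage''; once the normal-core description of $\pi_{nor}$ is in place, each clause of the theorem reduces to a routine group-theoretic check.
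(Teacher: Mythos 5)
Your proof is correct, and it fleshes out what the paper simply declares ``obvious'' with no argument at all. The normal-core description you use is exactly the framework the paper sets up in the paragraph preceding the theorem (the normalization corresponds to the intersection of the conjugates of the subgroup $H$ defining $\pi_1$), so your route is the intended one; you have just carried out the three routine verifications the authors skip. The one point worth stating a bit more explicitly is the bridge in your last paragraph: the identification of the monodromy groups is not merely an abstract isomorphism but is induced by the identity on $G=\pi_1(S\setminus(A\cup B),x_0)$, so the image of the small loop $[\gamma_k]$ is literally the same element of $G/N$ for both coverings; and for the regular covering given by the normal subgroup $N$, all cycles of the permutation induced by $[\gamma_k]$ on $G/N$ have the same length, equal to the order of $[\gamma_k]$ modulo $N$, which is why every point of $\pi_{nor}^{-1}(a_k)$ has multiplicity exactly $r_k$ rather than merely having $r_k$ as the least common multiple of the local multiplicities (as can happen for a non-normal covering). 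With that sentence added, each clause of the theorem is fully justified.
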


The following theorem \ref{th:th3} provides an explicit construction of the universal covering with a given signature if some covering with this signature is given.

\begin{theorem}\label{th:th3} Let $\pi_{nor}:Y_{nor}\rightarrow S$  be the normalization of the covering $\pi_1: Y\rightarrow S$ with signature $(A,\,B,\,R)$ and let $\pi :Z\rightarrow Y$ be the universal covering of $Y$. Then $\pi_{nor}\circ \pi: Z\rightarrow S$ is the universal covering with signature $(A,\,B,\,R)$.
\end{theorem}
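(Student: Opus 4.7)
The plan is to verify the two defining properties of a universal covering with signature $(A,B,R)$ for the composition $\pi_{nor}\circ\pi\colon Z\to S$. (The theorem is most naturally read by taking $\pi$ to be the universal covering $Z\to Y_{nor}$ rather than $Z\to Y$, so that the composition is well-defined; note that $Z$ is the simply connected universal cover of both $Y$ and $Y_{nor}$.) Concretely I would check that $\pi_{nor}\circ\pi$ is admissible with signature $(A,B,R)$, that $Z$ is simply connected, and that the multiplicity at each preimage of $a_k\in A$ equals $r_k$.

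First I would use that $\pi$ is an unramified topological covering of Riemann surfaces, hence a local biholomorphism everywhere. Combined with Theorem~2, which gives signature $(A,B,R)$ for $\pi_{nor}$ with multiplicity $r_k$ at preimages of $a_k$, this immediately yields that the image $(\pi_{nor}\circ\pi)(Z)=S\setminus B$, that the critical values of the composition are those of $\pi_{nor}$ alone (namely $A$), and that the local multiplicity of $\pi_{nor}\circ\pi$ at any $c_j\in Z$ over $a_k$ equals the multiplicity of $\pi_{nor}$ at $\pi(c_j)$, which is $r_k$. This yields the multiplicity condition.

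Next I would verify the orders of local monodromy. At $a_k$, fix a small loop $\gamma$ around $a_k$ in $S$. In a local chart on $Y_{nor}$ in which $\pi_{nor}$ takes the form $z\mapsto z^{r_k}$, the lift of $\gamma$ traverses $1/r_k$ of a loop around a preimage $c_j'$, so $\gamma^{r_k}$ lifts to a loop that bounds a disc in this chart. Being null-homotopic in $Y_{nor}$, it lifts further to a closed curve in $Z$; no smaller power closes up already in $Y_{nor}$, so the monodromy order at $a_k$ is exactly $r_k$. At $b_j\in B$, no power of a small loop closes up in $Y_{nor}$ since the corresponding order is infinite for $\pi_{nor}$, and a fortiori none closes up in $Z$. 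The simple connectedness of $Z$ is tautological from the definition of the universal covering.

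The only subtle point is the preservation of the finite monodromy orders under the further unramified covering $\pi\colon Z\to Y_{nor}$: a priori one might worry that the order could drop or rise, but the local disc argument shows that $\gamma^{r_k}$ is already null-homotopic in $Y_{nor}$ and therefore lifts automatically to a closed curve in $Z$, while smaller powers of $\gamma$ fail to close even in $Y_{nor}$, and closure of a lift in $Z$ implies closure of its image in $Y_{nor}$.
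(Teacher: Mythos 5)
Your proof is correct and follows essentially the same route as the paper's, which simply observes that $Z$ is simply connected by construction and that the multiplicity over $a_k$ is $r_k$ because $\pi$ is a local diffeomorphism while $\pi_{nor}$ has multiplicity $r_k$ at $\pi(z)$; your additional verification that the composition is admissible with the stated monodromy orders is detail the paper leaves implicit, and your reading of the composition as $Z\to Y_{nor}\to S$ matches the paper's evident intent. One caveat: your parenthetical assertion that $Z$ is the universal cover of both $Y$ and $Y_{nor}$ is false in general, since the map $Y_{nor}\to Y$ may be branched over the preimages of $A$ (the local multiplicity of $\pi_1$ at a point of $Y$ over $a_k$ need only divide $r_k$); fortunately this remark is inert, as your argument only ever uses the universal cover of $Y_{nor}$.
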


\begin{proof} By construction the surface $Z$ is simply-connected. If $\pi \circ\pi_{nor}(z)=a_k$, then the multiplicity of the mapping $\pi \circ\pi_{nor}$ at point $z$ is equal to $r_k$. Indeed, the mapping $\pi$ is a local diffeomorphism at point $z$, while the mapping $\pi_{nor}$ has multiplicity $r_k$ at the point $\pi(z)$.
\end{proof}

Theorems 1--\ref{th:th3} provide a way to classify all the coverings with a given signature $(A,\,B,\,R)$ by considering the universal covering with the given signature and its group of deck transformations.

Let $\pi :Z\rightarrow S$ be the universal covering with the signature $(A,\,B,\,R)$. The group $G$ of deck transformations of $\pi$ acts on $Z$. The quotient space of $Z$ by the action of $G$ is isomorphic to $S\setminus B$. The set of orbits on which  $G$ acts freely is isomorphic to  $S\setminus (A\cup B)$. If a point $c\in Z$ gets mapped to the point $a_k\in A$  in the quotient space, then the stabilizer of the point $c$ contains $r_k$ elements.

We say that $H\subset G$ is a \textit{free normal subgroup} of the group $G$ if $H$ acts freely on $Z$ and $H$ is a normal subgroup of $G$. We say that the \textit{subgroup $F\subset G$  is admissible} if the intersection $H=\bigcap F_i$ of all the subgroups $F_i$ conjugate to $F$ is a free normal subgroup of $G$.

\begin{corollary} Any covering with signature $(A,\,B,\,R)$ is isomorphic to a quotient of $Z$ by an admissible subroup $F\subset G$. Conjugate subgroups $F_i$ correspond to equivalent coverings. The monodromy group of the covering is isomorphic to the quotient $G/H$, where $H=\bigcap F_j$. A normal covering with signature $(A,\,B,\,R)$ corresponds to a free normal subgroup $H$. Its group of deck transformations is isomorphic to the monodromy group $G/H$.
\end{corollary}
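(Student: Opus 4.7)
The plan is to use the universal property from Theorem 1 to realize an arbitrary covering $\pi_1:Y\to S$ with signature $(A,B,R)$ as a quotient of $Z$, and to use Theorem \ref{th:th3} to verify the admissibility condition. Concretely, I would fix points $z_0\in Z$ and $y_0\in Y$ with $\pi(z_0)=\pi_1(y_0)=x_0\notin A$, invoke Theorem 1 to produce $\pi_2:Z\to Y$ with $\pi=\pi_1\circ\pi_2$, and set
\[
F=\{g\in G:\pi_2\circ g=\pi_2\},
\]
the stabilizer of $\pi_2$. A standard covering-space argument over $S\setminus(A\cup B)$, combined with the fact that the local multiplicities of $\pi$ and $\pi_1$ at points above $A$ are determined by the common signature, yields an induced homeomorphism $Z/F\simeq Y$ over $S$.

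For admissibility, let $H=\bigcap_{g\in G}gFg^{-1}$ be the normal core of $F$. I would identify $Z/H$ with the normalization $Y_{nor}$ of $\pi_1$: the subgroup of $\pi_1(S\setminus(A\cup B),x_0)$ classifying $Y\to S\setminus(A\cup B)$ maps under the monodromy surjection onto $F$, so intersecting its conjugates produces the subgroup mapping onto $H$, which is the defining property of the normalization. Since $Y_{nor}$ is itself a covering with signature $(A,B,R)$, Theorem \ref{th:th3} applied to it identifies $Z\to Y_{nor}$ with its universal covering; in particular $H$ acts freely and properly discontinuously on $Z$. Hence $H$ is a free normal subgroup of $G$ and $F$ is admissible.

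The remaining claims then follow quickly. The only freedom in the construction of $F$ is the choice of lift $z_0$ of $y_0$: replacing $z_0$ by $g\cdot z_0$ replaces $F$ by $gFg^{-1}$, and conversely any two admissible subgroups in the same conjugacy class give isomorphic quotients of $Z$ because $g$ itself is a $\pi$-preserving automorphism of $Z$. The monodromy group of $\pi_1$ coincides with that of its normalization $Y_{nor}=Z/H$ (by the theorem on normalization stated before Theorem \ref{th:th3}), and on the normal covering $Y_{nor}\to S\setminus B$ the deck transformation group and the monodromy group both act simply transitively on the generic fibers, so each equals $G/H$. A covering is normal precisely when $F=H$, in which case the monodromy group and the deck transformation group agree with $G/H$. \emph{The main obstacle} in this plan is the freeness of the $H$-action on $Z$: any $h\in H$ fixing a point $c\in Z$ over some $a_k\in A$ would induce ramification at $\pi_{nor}(c)$ of order dividing but strictly less than $r_k$, contradicting that $Y_{nor}$ has signature $(A,B,R)$. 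This is the step where the signature hypothesis is genuinely used.
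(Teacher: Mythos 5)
Your proposal is correct and follows the route the paper itself intends: the corollary is stated there without proof, as an immediate consequence of Theorems 1--\ref{th:th3} and the description of the $G$-action on $Z$, and your write-up simply supplies the standard covering-space details (passing to the unbranched covering over $S\setminus(A\cup B)$, taking the normal core, and using that the normalization has full multiplicity $r_k$ over each $a_k$ to get freeness of $H$). The only imprecision is the remark that the local multiplicities of $\pi_1$ over $A$ are ``determined by the common signature''--- for a non-normal $Y$ they are only divisors of the $r_k$ --- but this does not affect the argument, since $Z/F\simeq Y$ already follows from the isomorphism over $S\setminus(A\cup B)$ together with uniqueness of the branched extension.
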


Admissible mappings can be divided into three natural classes.

\begin{definition} The signature of a covering is \textit{elliptic, parabolic or hyperbolic} if the universal covering $\pi: Z\rightarrow S$ with this signature has total space $Z$ isomorphic to the Riemann sphere, line $\Bbb C^1$ or the open unit disc respectively.
\end{definition}
In $\S \S$ 2--3 we discuss coverings with elliptic and parabolic signatures. Now we turn to a geometric construction of a large class of branched coverings.

\subsection{Coverings and Classical Geometries}
By using the geometry of a sphere, Euclidean and hyperbolic planes one can construct universal coverings with many signatures. In this section we use realizations of each of these geometries on a subset $E$ of the Riemann sphere $\Bbb C^1\cup \{\infty\}$:  the sphere is identified with the set $\Bbb C^1\cup \{\infty\}$ by means of the stereographic projection, the Euclidean plane is identified with the line $\Bbb C^1$ and the hyperbolic plane is identified with its Poincare model in the unit disc $|z|<1$.

We consider polygons in $E$ that may have ``vertices at infinity" lying in $\overline E$. For the plane $\Bbb C$ such a vertex  is the point $\infty$ at which two parallel sides meet. For the hyperbolic plane such vertex is a point on the circle $|z|=1$ at which two neighbouring sides meet. The angle at a vertex at infinity is equal to zero.

Let $E$ be the sphere, plane or hyperbolic plane, and let $\Delta\subset \overline E$ be an $(n+k)$-gon with finite vertices $A'=\{a_1',\dots,a_n'\}$ and vertices at infinity $B'=\{b'_{n+1},\dots, b'_{n+k}\}$. Let $R=(r_1,\dots,r_{n+k})$, where $r_i>1$ are natural numbers for $1\leq i\leq n$ and $r_i=\infty$ for $n<i\leq n+k$.

\begin{definition} The polygon $\Delta\subset \overline E$ has signature $(A',\, B',\,R)$, if for $1\leq i\leq n$ its angle at vertex $a'_i$ is $\pi/r_i$ and for $n< i\leq n+k$ its angle at vertex $b'_i$ is 0.
\end{definition}

It is clear that the signature $(A',\,B',\,R)$ with  $\# A'\cup B' \leq 2$ can be a signature of a polygon only if $R=(k,k)$ or $R= (\infty, \infty)$.  We assume that \textit{when $n+k\leq 2$ this condition on the set $R$ holds}.

\begin{definition}
The \textit{characteristic} of the signature $R=(r_1,\dots,r_{n+k})$ is $$\chi(R)=\sum _{1\leq i\leq n+k}(1-1/r_i).$$
\end{definition}

\begin{definition} We say that the set  $R$ is \textit{elliptic, parabolic or hyperbolic} if $\chi(R)<2$, $\chi(R)=2$ or $\chi(R)>2$ respectively.
\end{definition}

\begin{claim} Suppose that the polygon $\Delta\subset \overline E$ has signature $(A',\, B',\,R)$. The set $R$ is elliptic, parabolic or hyperbolic if and only if $E$ is the sphere, Euclidean plane or the hyperbolic plane respectively.
\end{claim}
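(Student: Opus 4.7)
The plan is to identify $\chi(R)$ with a rescaled angle deficit of $\Delta$ and then invoke the classical angle--sum formula for geodesic polygons in each of the three constant-curvature geometries. Set $m=n+k$. The interior angle of $\Delta$ at $a_i'$ is $\pi/r_i$ and the angle at a vertex at infinity $b_j'$ is $0$. With the convention $1/\infty=0$ the sum of interior angles is therefore
\[
\Sigma(\Delta)\;=\;\pi\sum_{i=1}^{n+k}\frac{1}{r_i}\;=\;\pi\bigl(m-\chi(R)\bigr),
\]
since $\chi(R)=\sum_{i=1}^{n+k}(1-1/r_i)=m-\sum_{i=1}^{n+k}1/r_i$.

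Next I would apply the standard angle-sum formula for an $m$-gon with geodesic sides: on the round sphere $\Sigma(\Delta)=(m-2)\pi+\operatorname{Area}(\Delta)$; in the Euclidean plane $\Sigma(\Delta)=(m-2)\pi$; and in the hyperbolic plane $\Sigma(\Delta)=(m-2)\pi-\operatorname{Area}(\Delta)$. Substituting the formula above produces the single relation
\[
\pi\bigl(2-\chi(R)\bigr)\;=\;\varepsilon\cdot\operatorname{Area}(\Delta),
\]
with $\varepsilon=+1$ on the sphere, $\varepsilon=0$ in the Euclidean plane, and $\varepsilon=-1$ in the hyperbolic plane. Since $\operatorname{Area}(\Delta)>0$ in each geometry, this yields $\chi(R)<2$, $\chi(R)=2$, $\chi(R)>2$ precisely in the spherical, Euclidean, and hyperbolic cases. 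The ``iff'' direction then follows from the trichotomy of the three mutually exclusive options for $\chi(R)-2$.

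Two points require care and constitute the only real obstacle. First, the angle-sum formula must be applied to polygons that may have ideal vertices. In the hyperbolic plane such a polygon still has finite positive area equal to the angle deficit, which is the Gauss--Bonnet formula for finite-area ideal polygons and can be justified by a limit from compact polygons with vertices tending to the boundary; in the Euclidean plane a polygon with a vertex at $\infty$ is a strip or half-strip of infinite area, but the identity $\Sigma(\Delta)=(m-2)\pi$ persists by the same limiting argument, and it is exactly this case that forces $\chi(R)=2$. Second, the degenerate bigon case $n+k\le 2$ with $R=(k,k)$ or $R=(\infty,\infty)$ should be inspected separately: a spherical lune of angle $\pi/k$ realizes $R=(k,k)$ and has $\chi(R)=2(1-1/k)<2$, while two parallel Euclidean lines realize $R=(\infty,\infty)$ with $\chi(R)=2$; no hyperbolic bigon exists, matching the fact that $\chi(R)>2$ is impossible when $n+k\le 2$. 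Once these limiting issues are settled, the remainder of the proof is direct algebraic substitution.
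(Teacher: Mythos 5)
Your proof is correct and is essentially the paper's argument: the paper compares the sum of \emph{external} angles of $\Delta$, which equals $\pi\chi(R)$, against $2\pi$ in each geometry, and this is exactly your relation $\pi(2-\chi(R))=\varepsilon\cdot\operatorname{Area}(\Delta)$ restated via Gauss--Bonnet. Your extra attention to ideal vertices and the bigon cases fills in details the paper leaves implicit, but the underlying idea is the same.
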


\begin{proof} On the sphere the sum of external angles of a polygon is $< 2\pi$, on the plane it is $=2\pi$ and on the hyperbolic plane it is $>2 \pi$.
For  $\Delta$ this sum is equal to  $\pi \sum _{1\leq i\leq n+k}(1-1/k_i)=\pi \chi(R)$.
\end{proof}

\begin{definition} Given a polygon $\Delta\subset \overline E$ with signature $(A',\, B',\,R)$  define $\tilde G_{\Delta}$ to be the group of isometries of the space $E$,  generated by reflections in the sides of the polygon. Define the group $G_{\Delta}$ to be the index two subgroup of the group $\tilde G_{\Delta}$, consisting of orientation preserving isometries.
\end{definition}

The condition on the angles of the polygon guarantees that the images $g(\Delta)$ of the polygon $\Delta$ under the action of the group $\tilde G_{\Delta}$ cover the space $E$ without overlaps. Divide the polygons $g(\Delta), g\in \tilde G_{\Delta}$ into two classes -- white, if $g\in G_{\Delta}$, and black otherwise. Let $g_ {l}$  be the reflection in the side $l$ of the polygon $\Delta$. Define (possibly non-convex) polygon  $\diamondsuit$ as the union of polygons $\Delta$ and $g_{l}(\Delta)$ sharing the side $l$. It can be seen from the construction that the polygon $\diamondsuit$ is a fundamental domain for the action of the group $G_{\Delta}$. The polygon $\diamondsuit$ contains $l$and $l$ is not its side. The transformation $g_{l}$ glues each of the sides $l_j$ of the polygon $\Delta$ with the side $g_{l} (l_j)$. The following claim can be easily verified.

\begin{claim} The stabilizer of the vertex $a'_i\in A'$ under the action of the group $G_{\Delta}$ contains $r_i$ elements. The points of $E$ that do not belong to the orbits of the points $a'_i\in A'$ have trivial stabilizers.
\end{claim}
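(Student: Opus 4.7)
The plan is to leverage two inputs that the excerpt has already set up: the simple transitivity of $\tilde G_\Delta$ on the tiling $\{g(\Delta) : g \in \tilde G_\Delta\}$ (which is exactly the content of the ``cover $E$ without overlaps'' assertion), together with the elementary fact that in each of the three classical geometries an orientation-preserving isometry with an interior fixed point is a rotation about that point. From the first input one also gets that $G_\Delta$ acts simply transitively on the white tiles, which is the workhorse for the second half of the claim.

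First I would handle the stabilizer of $a_i'$. Let $l$ and $l'$ be the two sides of $\Delta$ meeting at $a_i'$. Both reflections $g_l, g_{l'}$ fix $a_i'$, so their composition $\sigma = g_l g_{l'} \in G_\Delta$ fixes $a_i'$; since the angle between $l$ and $l'$ equals $\pi/r_i$, this $\sigma$ is the rotation about $a_i'$ by $2\pi/r_i$, and so $\langle \sigma \rangle$ is a cyclic subgroup of order $r_i$ inside the stabilizer. Conversely, any $g \in G_\Delta$ fixing $a_i'$ is a rotation about $a_i'$ by some angle $\alpha$; since $g$ must permute the $r_i$ white tiles incident to $a_i'$ (which are arranged with equal angular spacing $2\pi/r_i$ by the tiling property), $\alpha$ is forced to be a multiple of $2\pi/r_i$. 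The stabilizer is therefore exactly cyclic of order $r_i$.

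Next I would show that every point $x \in E$ outside the orbits of $A'$ has trivial stabilizer, case-splitting by the stratum of the tiling containing $x$. If $x$ is interior to some tile, simple transitivity of $G_\Delta$ on white tiles forces any stabilizing element to fix that tile, hence to be the identity. If $x$ lies on the interior of a side separating two tiles of opposite color, a stabilizing element of $G_\Delta$ preserves colors and so fixes each of the two tiles, reducing to the previous case. Finally, any vertex of the tiling lying in $E$ is automatically in the orbit of some $a_i'$: the vertices at infinity $b_j'$ lie in $\overline E \setminus E$, and the $\tilde G_\Delta$-orbit of $a_i'$ coincides with its $G_\Delta$-orbit because the reflection $g_l \in \tilde G_\Delta \setminus G_\Delta$ fixes $a_i'$, so one can always switch cosets without changing the vertex.

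The only nontrivial geometric ingredient here is the tiling statement itself --- a form of Poincare's polyhedron theorem in the hyperbolic case --- which the excerpt takes as given; with this in hand, the argument above is essentially a routine bookkeeping exercise in orientation-preserving isometries and color-preservation.
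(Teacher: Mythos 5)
Your argument is correct, and it is exactly the verification the paper has in mind: the paper offers no proof beyond ``can be easily verified,'' and your reasoning uses precisely the ingredients set up in the preceding paragraph (the tiling of $E$ without overlaps, the two-coloring, and the resulting freeness of the $G_\Delta$-action on white tiles) together with the standard classification of orientation-preserving isometries with a fixed point. The stratification into tile interiors, edge interiors, and vertices, plus the observation that the $\tilde G_\Delta$- and $G_\Delta$-orbits of each $a_i'$ coincide, completes the claim as intended.
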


Consider a Riemann mapping $f$ of the polygon $\Delta\in \overline E$ with signature $(A',\,B',\,R)$ onto the upper half-plane. We introduce the following notations:

$A$ is the set $f(A')$,$\,$  $a_k=f(a'_k)$ for $a'_k\in A'$,

 $B$ is the set $f(B')$, $\,$ $b_j=f(b'_j)$ for $b'_j\in B'$.

\begin{theorem} The mapping $f:\Delta\rightarrow \Bbb C^1\cup \{\infty\}$ can be extended to $E$ and defines a universal branched covering with signature $(A,\,B,\,R)$ over the Riemann sphere. The mapping $f$ realizes the quotient of the space $E$ by the action of the group $G_{\Delta}$.
\end{theorem}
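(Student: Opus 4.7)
The plan is to extend $f$ by the Schwarz reflection principle across every side of $\Delta$, to propagate this extension over the entire tiling of $E$ by the images $g(\Delta)$, $g\in\tilde G_{\Delta}$, and then to verify that the resulting map is a branched covering of the sphere with the prescribed signature and that it realizes the quotient of $E$ by $G_{\Delta}$.

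First I would construct the extension. The Riemann mapping $f$ sends the interior of $\Delta$ conformally onto the open upper half-plane and extends continuously to $\overline\Delta$ with $f(A')=A$ and $f(B')=B$ on $\mathbb R\cup\{\infty\}$, so each side of $\Delta$ is carried to an arc of the real axis. For a side $l$ of $\Delta$, the Schwarz reflection principle, applied to the reflection $g_l$ in $l$ on the domain side and to complex conjugation on the target side, extends $f$ holomorphically to $\Delta\cup l\cup g_l(\Delta)$, sending $g_l(\Delta)$ conformally onto the lower half-plane. Iterating and using the tiling already described, for each $g\in\tilde G_{\Delta}$ set $\tilde f(x)=f(g^{-1}(x))$ on $g(\Delta)$ if $g\in G_{\Delta}$, and $\tilde f(x)=\overline{f(g^{-1}(x))}$ otherwise. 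Consistency on shared sides is exactly the Schwarz principle, and since the polygons $g(\Delta)$ cover $E$ without overlap, this yields a single-valued holomorphic map $\tilde f:E\setminus C\to S\setminus B$, where $C\subset E$ is the $\tilde G_{\Delta}$-orbit of $A'$.

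Next I would analyze $\tilde f$ near a point $c\in C$ lying over some $a_k\in A$. By the preceding claim on stabilizers, exactly $r_k$ elements of $G_{\Delta}$ fix $c$, and $2r_k$ tiles meet at $c$ alternately white and black, each contributing a sector of angle $\pi/r_k$ that $\tilde f$ maps conformally onto a half-plane. Unfolding these sectors one sees that in suitable local coordinates $\tilde f$ has the form $w\mapsto w^{r_k}$, so $\tilde f$ extends holomorphically across $c$ with multiplicity exactly $r_k$. Hence $\tilde f$ extends to a branched covering $E\to S\setminus B$ whose only branch points are the elements of $C$, each mapped with the prescribed multiplicity $r_k$; this realizes the signature $(A,B,R)$.

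Finally, two points of $E$ are identified by $\tilde f$ precisely when they lie in a common $G_{\Delta}$-orbit (within each tile $f$ is injective, and the colour of the tile determines which half-plane the image lies in), so $\tilde f$ induces an isomorphism $E/G_{\Delta}\cong S\setminus B$, realizing the quotient. Simple-connectedness of $E$, combined with the local multiplicity $r_k$ at each preimage of $a_k$ established above, are exactly the conditions that characterize the universal covering with signature $(A,B,R)$, so $\tilde f$ is universal. The delicate step is the local analysis at the vertices of $A'$ and their orbits: this is where the angle condition $\pi/r_k$ must be converted into a global branch multiplicity $r_k$, and everything else is bookkeeping of the reflections.
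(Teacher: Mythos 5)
Your proposal is correct and follows exactly the route the paper indicates: its entire proof is the single line ``Follows from Riemann--Schwartz reflection principle,'' and your argument is a faithful, detailed unwinding of that principle --- reflecting across sides, propagating over the two-colored tiling, converting the angle $\pi/r_k$ into local multiplicity $r_k$ at the vertices, and reading off the quotient and universality. No discrepancy with the paper's approach.
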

\begin{proof} Follows from Riemann-Schwartz reflection principle.
\end{proof}

\section{Spherical Case}
\subsection{Application of Riemann-Hurwitz formula}
Suppose that a discrete group of automorphisms $G$ acts on the sphere $Z$. Then the group $G$ is finite and the quotient space $Z/G$ is a sphere (since there are non-constant analytic mappings of the sphere to a higher-genus Riemann surface). The quotient mapping $Z\rightarrow Z/G$ defines (up to a composition with an automorphism of the sphere $S$) a universal covering $\pi:Z\rightarrow S$ with elliptic  signature $(A,\,R)$.

\begin{claim}The signature $(A,\,R)$ has an elliptic set $R$.
\end{claim}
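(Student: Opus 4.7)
The plan is to apply the Riemann--Hurwitz formula to the quotient map $\pi:Z\to S$, exploiting the fact that both source and target are spheres (hence of genus $0$) and that the fibres over the branching points are controlled by the orders $r_k$.

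First I would observe that since $G$ acts discretely on the sphere it is finite; let $N=|G|$, so $\pi$ has degree $N$. Because the quotient is taken over the whole sphere $Z$, the map $\pi$ is defined everywhere on $S$, so the exceptional set $B$ is empty and $\chi(R)=\sum_{k=1}^{n}(1-1/r_k)$. Next I would identify the ramification data: by the universality property (the multiplicity of $\pi$ at a preimage of $a_k$ equals $r_k$, which is also the size of the stabilizer) each fibre $\pi^{-1}(a_k)$ consists of exactly $N/r_k$ points, at each of which the ramification index is $r_k$.

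Substituting into Riemann--Hurwitz with $g(Z)=g(S)=0$ gives
\begin{equation*}
-2 \;=\; -2N \;+\; \sum_{k=1}^{n}\frac{N}{r_k}(r_k-1) \;=\; -2N + N\,\chi(R),
\end{equation*}
so $\chi(R)=2-2/N$. Since $N$ is a finite positive integer, $\chi(R)<2$, which is exactly the definition of an elliptic set.

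There is no real obstacle here: the only points to be careful about are (a) checking that $B=\emptyset$, which follows because a quotient of a compact surface by a finite group is compact and the map extends over every point, and (b) correctly counting the fibre size over a branch point, which is $N/r_k$ by the orbit-stabilizer theorem applied to the $G$-action. Once these are in place, Riemann--Hurwitz yields the desired inequality immediately.
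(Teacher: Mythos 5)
Your proof is correct and follows essentially the same route as the paper: both apply the Riemann--Hurwitz formula to the degree-$N$ quotient map between spheres, count $N/r_k$ points of ramification index $r_k$ over each $a_k$, and obtain $\chi(R)=2-2/N<2$. The extra care you take in justifying $B=\emptyset$ and the fibre counts is sound but not a substantive departure from the paper's argument.
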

\begin{proof}
Let $\# G=N$. Riemann-Hurwitz formula implies $2 =2N - \sum _{a_i\in A}(N-N/r_i)= N (2-\chi(R))$. Hence $\chi(R) <2$.
\end{proof}

We now give names to the following sets: 1) $(k, k)$  -- \textit{the set of a $k$-gon}, 2)~$(2,2, k)$ -- \textit{the set of the dihedron $D_k$}, 3)~$(2,3,3)$ --  \textit{the set of tetrahedron}, 4)~$(2,3,4)$ -- \textit{the set of cube/octahedron}, 5) $(2,3,5)$ -- \textit{ the set of dodecahedron/icosahedron}. The sets 1)-5) are elliptic.

\begin{claim} If the signature $(A,\,R)$ is elliptic, then the set $R$  is among the 5 sets mentioned above.
\end{claim}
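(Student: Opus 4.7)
The plan is a straightforward case analysis on $n = \# A$, using that $\chi(R) = \sum_{i=1}^{n}(1 - 1/r_i) < 2$ with each $r_i \geq 2$ (since the definition of admissibility requires $r_i > 1$), together with the lower bound $n \geq 2$ from the standing assumption $n + k \geq 2$ (here $k=0$ because the signature has no points in $B$).

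First I would rule out large $n$. Since each summand $1 - 1/r_i \geq 1/2$, one has $\chi(R) \geq n/2$, so $n \leq 3$. Indeed $n=4$ forces $\chi(R) \geq 2$ with equality only when all $r_i = 2$, which is not strictly less than $2$, and $n \geq 5$ gives $\chi(R) \geq 5/2$.

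Next I would handle $n = 2$ separately: the standing assumption in the definition of signature forces $r_1 = r_2 = k$, giving the $k$-gon set $(k,k)$, and one checks $\chi = 2(1 - 1/k) < 2$, so every such set is elliptic. Then I would treat $n = 3$ as the main (still elementary) part. Assume without loss of generality $r_1 \leq r_2 \leq r_3$. The inequality $\chi(R)<2$ rewrites as $1/r_1 + 1/r_2 + 1/r_3 > 1$. If $r_1 \geq 3$ the left side is at most $1$, so $r_1 = 2$ and I need $1/r_2 + 1/r_3 > 1/2$. If $r_2 \geq 4$ this gives at most $1/2$, so $r_2 \in \{2, 3\}$. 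For $r_2 = 2$ any $r_3 = k \geq 2$ works, yielding $(2,2,k)$. For $r_2 = 3$ the inequality becomes $1/r_3 > 1/6$, so $r_3 \in \{3,4,5\}$, yielding $(2,3,3)$, $(2,3,4)$, $(2,3,5)$.

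I do not foresee a real obstacle; the only subtle point is remembering to invoke the normalization convention $r_1 = r_2$ in the degenerate case $n=2$ (otherwise one would mistakenly list extra ``sets'' $(k_1, k_2)$ with $k_1 \neq k_2$, which the paper has already excluded from the list of admissible signatures). The casework is rigid because of the denominator structure, and each surviving tuple is one of the five named sets, completing the classification.
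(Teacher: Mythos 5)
Your proposal is correct and follows exactly the route the paper intends: the paper's proof simply says ``find all solutions of $\chi(R)<2$ subject to the restrictions on $R$,'' and you carry out that enumeration explicitly (bounding $n\leq 3$, invoking the $r_1=r_2$ convention for $n=2$, and doing the rigid casework for $n=3$). The details all check out, so this is just a fleshed-out version of the paper's own argument.
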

\begin{proof} It is enough to find all solutions of the inequality $\chi (R)<2$ satisfying the restrictions imposed on $R$ for $n\leq 2$.
\end{proof}

\subsection {Finite Groups of Rotations of the Sphere}
Consider the following polyhedra in $\Bbb R^3$ having the center of mass at the origin:

1) a pyramid with a regular $n$-gon as its base

2) dihedron with $k$ vertices, or, equivalently, a polyhedron consisting of two pyramids like in 1) joined along their base face

3) regular tetrahedron

4) cube or octahedron

5) dodecahedron or icosahedron.

The symmetry planes of each of these polyhedra cut a net of great circles on the unit sphere. This net divides the sphere into a union of equal spherical polygons $\Delta$ (triangles in cases 2)--5) and digons  in case 1). The stereographic projections of these nets can be found on p.227, figure 3. One sees that the signatures $(A',\,R)$ of polygons $\Delta$ in cases 1)--5) have a set $R$, which is equal to the set having the same name (and the same parameter $k$ in cases 1) and 2)).

Each polyhedron $\Delta$ defines a group $\tilde G_{\Delta}$ of isometries of the unit sphere generated by reflections in its sides, and its index 2 subgroup  $G_{\Delta}$ of orientation-preserving isometries from $\tilde G_{\Delta}$.

\begin{definition} The groups of rotations of the sphere described above are called as follows: 1) the group of the $k$-gon, 2) the group of the dihedron $D_k$, 3) the group of the tetrahedron, 4) the group of cube/octahedron, 5) the group of icosahedron/dodecahedron.
\end{definition}

\begin{claim} A spherical polyhedron with signature  $(A'\,R)$ exists if and only if  $R$ is one of the elliptic sets described above.
\end{claim}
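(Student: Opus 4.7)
The plan is to combine two results already established in the paper. For the ``only if'' direction, suppose a spherical polygon $\Delta\subset\overline E$ with signature $(A',R)$ exists, where $E$ is the sphere. The claim in Section 1.3 comparing the sum of external angles of $\Delta$ to $2\pi$ on each of the three model geometries shows that $E$ is the sphere if and only if $\chi(R)<2$, i.e.\ $R$ is elliptic. Hence, the existence of a spherical polygon forces $R$ to be elliptic.

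Next, I invoke the combinatorial classification carried out in Claim 5 of Section 2.1: solving the inequality $\chi(R)<2$ subject to the restriction that $R=(k,k)$ or $R=(\infty,\infty)$ when $n+k\leq 2$ yields exactly the five sets $(k,k)$, $(2,2,k)$, $(2,3,3)$, $(2,3,4)$, $(2,3,5)$. This finishes the forward implication.

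For the ``if'' direction, I would exhibit, for each of the five elliptic sets, an explicit spherical polygon realizing it. This is precisely what was produced at the start of Section 2.2: the net of great circles cut out on the sphere by the symmetry planes of the corresponding polyhedron (pyramid, dihedron, tetrahedron, cube/octahedron, dodecahedron/icosahedron) decomposes the sphere into congruent spherical polygons $\Delta$, and a direct inspection of the dihedral angles at the vertices of $\Delta$ shows that the signature of $\Delta$ has the set of orders equal to the named elliptic set. For sets 2)--5) the polygons are spherical triangles; for set 1), $\Delta$ is a digon bounded by two meridional arcs meeting at the two poles with angle $\pi/k$ at each pole, matching signature $(k,k)$.

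No step presents a genuine obstacle, since the claim is essentially a bookkeeping synthesis of the angle-sum argument of Section 1.3 with the integer classification of Section 2.1 and the explicit constructions of Section 2.2. The only point that requires a moment's care is the degenerate digon case in set 1), where one must verify that the sphere really is tiled by $2k$ congruent digons of angle $\pi/k$; this is a direct consequence of the $k$-fold rotational symmetry of the pyramid together with its reflection symmetry through the base plane.
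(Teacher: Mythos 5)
Your proposal is correct and follows essentially the same route as the paper: one direction by solving $\chi(R)<2$ under the stated restrictions on $R$ (using the angle-sum comparison to see that sphericity forces ellipticity), the other by exhibiting the polygons cut out by the symmetry planes of the five polyhedra. The extra care you take with the digon case is a welcome elaboration but not a departure from the paper's argument.
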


\begin{proof}For one of the directions it is enough to find all the solutions of the inequality $\chi(R)<2$ (having in mind the restrictions imposed on $R$ when $n\leq 2$). For the other direction it is enough to exhibit examples of the spherical polygons. All the examples are given by triangles and dihedrons that appear when the sphere is divided into equal polygons by the symmetry planes of the polyhedra described above (see figure 3).
\end{proof}

\begin{theorem} A finite group of automorphisms of the Riemann sphere with a given signature coincides up to an automorphism of the Riemann sphere with a group of rotations of the sphere with the same name as its signature.
\end{theorem}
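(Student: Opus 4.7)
The plan is to realize both the abstract finite group $G$ and the named rotation group $G_\Delta$ with the corresponding signature as deck groups of two universal coverings having the same signature, and then invoke the universal property of Theorem~1 to conjugate them by an automorphism of the Riemann sphere. First, given a finite $G \leq \mathrm{Aut}(S)$ acting on $Z = S$, the quotient map $\pi_G: Z \to Z/G$ is a universal branched covering (the quotient is again the sphere, as observed at the start of Section~2.1), so it carries a signature $(A, R)$. Claims 4 and 5 of the spherical section force $R$ to be one of the five elliptic sets with standard names. On the other side, Claim~6 of Section~2.2 exhibits a spherical polygon $\Delta$ whose set of orders is exactly this $R$, and the Schwartz reflection construction of Theorem~4 packages this as a second universal covering $\pi_\Delta: Z \to S$ with signature $(A_\Delta, R)$ whose deck group is, by definition, the named rotation group $G_\Delta$.

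Since both signatures satisfy $\#A, \#A_\Delta \leq 3$, Claim~1 lets me compose $\pi_G$ with an automorphism of the base sphere $S$ so that $\pi_G$ and $\pi_\Delta$ have literally identical signatures. Choose basepoints $z_0, z_0' \in Z$ mapping to a common regular point of $S$. Theorem~1 applied in both directions yields holomorphic maps $\phi, \psi: Z \to Z$ with $\pi_\Delta \circ \phi = \pi_G$, $\pi_G \circ \psi = \pi_\Delta$, $\phi(z_0) = z_0'$ and $\psi(z_0') = z_0$. The composite $\psi \circ \phi$ is a self-lift of $\pi_G$ fixing $z_0$, hence the identity, so $\phi$ is an automorphism of the Riemann sphere $Z$. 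For every $g \in G$ one then computes $\pi_\Delta \circ (\phi g \phi^{-1}) = \pi_G \circ g \circ \phi^{-1} = \pi_G \circ \phi^{-1} = \pi_\Delta$, so $\phi g \phi^{-1}$ is a deck transformation of $\pi_\Delta$, i.e.\ lies in $G_\Delta$. The inclusion $\phi G \phi^{-1} \subseteq G_\Delta$ must be an equality because, by the Riemann--Hurwitz computation of Claim~4, both groups have the same order $2/(2 - \chi(R))$.

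The only real point that needs care is the bookkeeping to arrange that the two universal coverings have identical signatures rather than merely the same set of orders; without Claim~1, Theorem~1 could not be applied directly because the branch loci $A$ and $A_\Delta$ could sit at different points of the base sphere. Once the signatures are aligned, the rest is a formal consequence of the uniqueness of the universal covering with a prescribed signature, and the conclusion -- that $G$ is conjugate in $\mathrm{Aut}(S)$ to the named rotation group -- drops out immediately.
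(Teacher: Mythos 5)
Your argument is correct, but it is worth noting that the paper itself offers no proof of this theorem at all: it is stated bare in \S 2.2, as a classical fact (essentially Klein's classification of finite groups of M\"obius transformations) that the reader is expected to accept after seeing the named rotation groups constructed. What you have done is assemble an actual proof out of the paper's own machinery, and the assembly is sound: the quotient map of a finite $G\subset\mathrm{Aut}(S)$ is a universal covering with an elliptic signature (Claims~4--5), the reflection construction realizes the same set of orders with deck group the named rotation group (Claim~6 and Theorem~4), Claim~1 aligns the two branch loci, and the universal property (Theorem~1) applied in both directions produces the conjugating M\"obius transformation, with Riemann--Hurwitz giving $\#G=\#G_\Delta=2/(2-\chi(R))$ and hence equality rather than mere inclusion. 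The one spot deserving an extra half-sentence is the conclusion that $\phi$ is an automorphism: $\psi\circ\phi=\mathrm{id}$ gives injectivity only, so you should either also check $\phi\circ\psi=\mathrm{id}$ by the same uniqueness-of-lifts argument at $z_0'$, or observe that an injective holomorphic self-map of a compact Riemann surface is automatically surjective, hence biholomorphic. You should also record explicitly that the automorphism of Claim~1 can be chosen to match the branch points \emph{in an order-preserving way} (order $2$ to order $2$, etc.), which holds because a M\"obius transformation can take any ordered triple to any ordered triple. With those two remarks your write-up is a complete and self-contained proof of a statement the paper leaves unproved.
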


\subsection{Coverings with Elliptic Signatures}
Every automorphism of the sphere has fixed points and thus the automorphism group of the sphere doesn't have free normal subroups. Fix an elliptic signature.The universal covering with this signature is the Riemann sphere $Z$ equipped with the deck transformation group $G$, the quotient map $ Z \rightarrow Z/G$ and an isomorphism $Z/G\rightarrow S$. 

The coverings with a given elliptic signature are in one to one correspondence with conjugacy classes of subgroups of $G$ that don't have nontrivial normal subgroups of the group $G$. Each such covering has normalization that is equivalent to the universal covering with the same signature and monodromy group isomorphic to the group  $G$. Thus the monodromy group of a covering with an elliptic signature is determined by its signature.

\subsection{ Equations with an Elliptic Signature}
\begin{theorem} An algebraic function with an elliptic signature and a set of orders not equal to the set $(2,3,5)$ can be represented in radicals. If the set of orders is equal to $(2,3,5)$, then it can be represented by radicals and solutions of equations of degree at most 5.
\end{theorem}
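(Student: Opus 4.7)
The plan is to invoke the classical Galois criterion: an algebraic function over $\mathbb{C}(x)$ is expressible in radicals if and only if its monodromy group (equivalently, the Galois group of the splitting field of its minimal polynomial over $\mathbb{C}(x)$) is solvable. By the results of the previous subsections, the monodromy group of a covering with elliptic signature $(A,R)$ coincides, up to conjugation in $\mathrm{Aut}(S)$, with the rotation group named by $R$: $\mathbb{Z}/k$ for $R=(k,k)$, the dihedral group $D_k$ for $R=(2,2,k)$, $A_4$ for $R=(2,3,3)$, $S_4$ for $R=(2,3,4)$, and $A_5$ for $R=(2,3,5)$.

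First I would dispose of the four non-icosahedral cases by checking solvability. Cyclic and dihedral groups are manifestly solvable. For $A_4$ the chain
\[
1 \triangleleft \mathbb{Z}/2 \triangleleft V_4 \triangleleft A_4
\]
has abelian factors, and appending $A_4 \triangleleft S_4$ yields a solvable series for $S_4$. Galois's theorem then furnishes an expression in radicals for any algebraic function whose monodromy group is among these four.

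The main case, and the real obstacle, is $R=(2,3,5)$ with monodromy $A_5$. Since $A_5$ is simple and nonabelian it is not solvable, so radicals alone cannot suffice. The key observation is that $A_5$, realized as even permutations of five letters, contains the stabilizer $A_4$ of a letter as a subgroup of index $5$. The corresponding intermediate covering of the universal $(2,3,5)$-covering has degree $5$ over $S$, so a generator $y$ of its function field satisfies a polynomial equation of degree $5$ over $\mathbb{C}(x)$. Over the enlarged base $\mathbb{C}(x,y)$ the Galois group of the original function is reduced from $A_5$ to the stabilizer $A_4$, which is solvable by the previous paragraph. Hence the original function is expressible in radicals over $\mathbb{C}(x,y)$; unwinding, this yields a representation in radicals together with the solution of a single auxiliary equation of degree $5$.

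The hard part is less the group theory than the dictionary between the topology of branched coverings and the algebra of field extensions: one must identify the monodromy group with the Galois group of the associated function field extension and invoke the analytic form of the solvability-in-radicals theorem. Once this framework is in place, the theorem reduces to the elementary group-theoretic computations above together with the classification of elliptic sets $R$ from \S 2.1.
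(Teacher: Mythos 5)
Your proposal is correct and follows exactly the route the paper intends: the paper states this theorem without an explicit proof, relying on \S 2.2--2.3 (where the monodromy group of a covering with elliptic signature is identified with the corresponding finite rotation group: cyclic, dihedral, $A_4$, $S_4$, or $A_5$) together with the classical Galois criterion over $\Bbb C(x)$, and your write-up supplies precisely these missing details, including the correct treatment of the icosahedral case via the index-$5$ subgroup $A_4\subset A_5$.
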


\begin{example} The inverse of the Chebyshev polynomial of degree $n$ has signature $A=\{1,-1,\infty\}$,  $R=(2,2,n)$ of elliptic type (the case of the dihedron $D_n$). This explains why the Chebyshev polynomials are invertible in radicals.
\end{example}

\begin{theorem} A linear differential equation of Fuchs type with elliptic signature and the set of orders different from the set $(2,3,5)$ can be solved in radicals. If the set of orders is $(2,3,5)$, then it can be solved in radicals and solution of algebraic equations of degree at most 5.
\end{theorem}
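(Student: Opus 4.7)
The plan is to reduce this statement about Fuchsian equations to the previous theorem about algebraic functions with elliptic signature. The reduction hinges on showing that a Fuchs-type equation with elliptic signature has \emph{only algebraic solutions}, because then each solution is one of the algebraic functions covered by the previous theorem.

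First I would observe that every elliptic set of orders in the classification $(k,k),(2,2,k),(2,3,3),(2,3,4),(2,3,5)$ consists entirely of \emph{finite} integers, so the exceptional set $B$ of a covering with elliptic signature is empty. Consequently, by the results of \S 2.3, the monodromy group $G$ of the covering associated with the generic solution of the Fuchsian equation is finite, isomorphic to the deck transformation group (cyclic, dihedral, tetrahedral $A_4$, octahedral $S_4$, or icosahedral $A_5$ according to the signature).

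Next I would invoke the classical fact that a Fuchsian equation whose monodromy representation has finite image has only algebraic solutions: every solution is a meromorphic function on the finite branched covering associated with the monodromy, which is a compact Riemann surface. Therefore the components of a fundamental solution are algebraic functions on $S$ with signature $(A,R)$. Applying the preceding theorem on algebraic functions, these algebraic solutions are expressible in radicals when $R\ne(2,3,5)$, and by radicals together with solutions of equations of degree at most $5$ when $R=(2,3,5)$. Since expressibility of the solutions in radicals (respectively, radicals plus degree-$\le 5$ equations) is precisely what is meant by solvability of the Fuchsian equation in radicals (respectively, in radicals plus degree-$\le 5$ equations), the theorem follows.

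The main obstacle is the assertion that finite monodromy for a Fuchsian equation forces all solutions to be algebraic. One route is to pass to the finite branched cover where the monodromy trivialises: the pullback of the equation has single-valued, meromorphic, moderate-growth (regular singular) solutions on a compact Riemann surface, hence rational ones, which descend to algebraic solutions on $S$. The remaining step, namely passing from solvability of the corresponding Galois/monodromy group to expressibility in radicals (with the $A_5$ exception handled by a single degree-$5$ equation), is exactly the content of the previous theorem and therefore need not be re-proved here.
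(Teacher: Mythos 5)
The paper states this theorem without any proof, so there is nothing to compare against directly; your argument is correct and is the natural one suggested by the paper's own development. The reduction --- an elliptic signature forces $B=\emptyset$ and a finite monodromy group determined by the signature (\S 2.3), finite monodromy together with regular singularities forces all solutions to be algebraic (meromorphic on the compact normalization), and the preceding theorem on algebraic functions with elliptic signature then applies --- is sound; the only point worth flagging is that an individual basis solution may have smaller branching orders than the generic solution, but its monodromy group is still a subquotient of the same finite (solvable, or icosahedral) group, so the conclusion is unaffected.
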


\section{The Case of the Plane}

\subsection {Discrete Groups of Affine Transformations}

Every automorphism of the complex line is an affine transformation $z\rightarrow az+b$ with $a\neq 0$. The group of affine transformations  has a commutative normal subgroup $\Bbb C$ consisting of translations  with a commutative factor-group $\Bbb C^*$. The group of automorphisms of the line is thus solvable and hence all its discrete subgroups are solvable as well. The affine transformations with no fixed points are precisely the translations.

The discrete groups $G$ of the group of affine transformations of the complex line can be classified up to an affine change of coordinates as having one of the eight types below. The space $\Bbb C^1/G$ for each group $G$, except the groups in case 4), is a sphere or a sphere without one or two  points. The quotient  $\Bbb C^1\to \Bbb C^1/G$ defines in these cases a covering with parabolic signature. For all the groups except the group in case 5), the set $A\cup B$ for these signatures consists of at most three points. Hence in this case the signature is defined up to an automorphism of the sphere by the set of its orders $R$.

We use the following notations: $S_k\subset \Bbb C^*$ is the multiplicative subgroup of order $k$, $\Lambda_2=(1,c)$  is the additive group $\Lambda_2\subset \Bbb C$ generated by the numbers $1$ and $c$, where $c\notin  \Bbb R$ is defined up to the action of the modular group;  the number $\lambda \notin \{0,1,\infty\}$ denotes a number under the equivalence where numbers $\lambda, 1-\lambda, \lambda ^{-1},(1-\lambda)^{-1}, \lambda (1-\lambda)^{-1}, \lambda^{-1} (1-\lambda)$  are equivalent,  $\tau_6$ -- a primitive root of unity of order 6.

The groups  $G$ consist of transformation $x\rightarrow ax+b$, where:

1) $a\in S_k$, $b=0$;$\,\,$ $R=(k,\infty)$;

2) $a=1$, $b\in \Bbb Z$; $\,\,$ $R=(\infty,\infty)$;

3)  $a\in S_2$, $b\in \Bbb Z$; $\,\,$ $R=(2,2,\infty)$;

4) $a=1$, $b\in \Lambda_2=(1,c)$;$\,\,$   $\Bbb C^1/G$ is a curve of genus one;

5) $a\in S_2$, $b\in \Lambda_2=(1,c)$;$\,\,$ signature $A=\{0,1,\infty, \lambda,\},R=(2,2,2,2)$;

6) $a\in S_4$, $b\in \Lambda_2=(1,i)$;$\,\,$ $R=(4,4,2)$;

7) $a\in S_3$, $b\in \Lambda_2=(1, \tau_6)$;$\,\,$ $R=(3,3,3)$;

8) $a\in S_6$, $b\in \Lambda_2=(1, \tau_6)$;$\,\,$ $R=(6, 3, 2)$.

\begin{theorem} A discrete group $G$ of affine transformations is up to an affine change of coordinates one of the groups from the list above. The signature of the coverings related to the action of the group is defined up to an automorphism of the sphere by the data from the list.
\end{theorem}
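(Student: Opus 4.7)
The plan is to exploit the short exact sequence $1 \to T \to G \to M \to 1$ attached to the multiplier homomorphism $(z\mapsto az+b) \mapsto a$, where $T$ is the translation subgroup of $G$ and $M \subset \mathbb{C}^*$ is its image. I would first classify the possibilities for $T$, then those for $M$ given $T$, and finally show that the extension splits after a translation, yielding the eight normal forms.

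As a discrete additive subgroup of $\mathbb{C}$, $T$ must be either $\{0\}$, a rank one lattice (conjugate by scaling to $\mathbb{Z}$), or a rank two lattice $\Lambda_2$. The computation $(az+b)(z+t)(az+b)^{-1} = z+at$ shows that $M$ acts on $T$ by multiplication, forcing $aT=T$ for every $a\in M$. When $T$ is a nonzero lattice this forces $|a|=1$ and $a$ to be a unit of the order $\mathrm{End}(T)$; hence $M$ is finite cyclic of order $N\in\{1,2,3,4,6\}$, with $(T,M)$ restricted exactly as in 2)--8): $T$ arbitrary for $N\in\{1,2\}$ (giving cases 2)--5), after scaling and, in rank two, normalizing one generator), $T=\alpha\mathbb{Z}[i]$ for $N=4$ (case 6), and $T=\alpha\mathbb{Z}[\tau_6]$ for $N\in\{3,6\}$ (cases 7, 8).

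The case $T=\{0\}$ I would treat separately. Any commutator $[g_1,g_2]\in G$ has multiplier one, hence is a translation, hence trivial; so $G$ is abelian. Two commuting non-identity affine transformations with non-trivial multipliers share their unique fixed points, so after a translation $G\subset\mathbb{C}^*$. Proper discontinuity of the action on $\mathbb{C}$ then rules out multipliers of modulus $\neq 1$ (an orbit $\{a^n z\}$ with $|a|\neq 1$ would accumulate at $0$), forcing $M\subset S^1$ to be a finite cyclic group of roots of unity, which is case 1).

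To finish, I would split the extension in cases 3) and 5)--8) by picking a generator $m\in M$ and any lift $\tilde m\in G$; the identity $1+m+\cdots+m^{|M|-1}=0$ implies $\tilde m^{|M|}=\mathrm{id}$, so $\tilde m$ has a unique fixed point that one translates to the origin. In the new coordinates $\tilde m$ acts as $z\mapsto mz$, giving $G = T\rtimes\langle m\rangle$ in the form listed. The signature entries in each row then follow by reading off the $G$-orbits of the fixed points of torsion elements together with their stabilizer orders, and Claim 1 (which identifies any signature with $\#A\cup B\le 3$ by $R$ alone) reduces the uniqueness statement to case 5), where the extra modulus $\lambda$ records the one lattice invariant. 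The main obstacle I expect is the $T=\{0\}$ step: one must be careful to use proper discontinuity on $\mathbb{C}$ rather than mere discreteness in $\mathrm{Aff}(\mathbb{C})$, so as to exclude spurious groups such as $\langle z\mapsto rz\rangle$ for real $r>1$, whose quotient fails to be a Riemann surface.
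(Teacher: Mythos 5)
Your proposal is correct and follows essentially the same route as the paper's sketch: isolate the translation subgroup $T$ (the paper produces a nontrivial element of it via the commutator $g_1g_2g_1^{-1}g_2^{-1}$ when two elements have distinct fixed points), observe that the multipliers act as automorphisms of the lattice $T$, and invoke the crystallographic restriction $k\in\{1,2,3,4,6\}$. Your write-up is in fact more complete at two points the paper leaves implicit --- the splitting of the extension by moving the fixed point of a finite-order lift of a generator of $M$ to the origin, and the exclusion of groups such as $\langle z\mapsto 2z\rangle$ in the $T=\{0\}$ case, which indeed requires proper discontinuity of the action on $\mathbb{C}$ rather than mere discreteness in the affine group, exactly as you note.
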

Below we sketch a proof of this relult. If $G$ does not contain translations and only one point is fixed under transformations $g\in G$, $g\neq e$, then $G$ is of type 1). If $G$ consist of translations only, then $G$ has type 2) or  4). If transformations $g_1, g_2\in G$ have different fixed points, then $g_1g_2g_1^{-1}g_2^{-1}\neq e$ and hence $G$ contains  a discrete subgroup of translations $\Lambda _G\neq G$ and hence is of type 2) or of type 4). If $g(z)= az+b$ and $g\in G$, then the multiplication $z\rightarrow az$ defines an automorphism of the lattice $\Lambda_G$. The group of automorphisms of a lattice is a group $S_k$, having at most two elements linearly independent over $Q$. Hence the order $k$ of group $S_k$ must be 1, 2, 3, 4, 6. This leads to the other remaining cases.

A group of type 4) does not belong to our subject, as $\Bbb C/\Lambda_2$ is a torus rather than a sphere.  A group of type 1) is not interesting for our purposes: it uniformizes functions with sets of orders $(k,\infty)$, among which only the functions with sets of orders $(k,k)$ are interesting to us. These functions has already been considered above. All other groups are interesting to us.

These groups (with the exception of the majority of groups of type 5) can be described geometrically by means of planar polygons.

\subsection { Affine Groups Generated by Reflections}

We call the sets of orders mentioned below as follows: 1) $(\infty, \infty)$  -- \textit{the set of a strip}, 2)~$(2,2, \infty)$ -- \textit{the set of a half-strip}, 3)~$(2,2,4)$ --  \textit{the set of a half of a square}, 4)~$(3,3,3)$ -- \textit{ the set of a regular triangle}, 5)~$(2,3,6)$ -- \textit{ the set of a half of a regular triangle}, 6)~$(2,2,2,2)$ -- the set of a rectangle. All sets mentioned above are parabolic.

\begin{claim} A planar polygon with signature $(A',\,B',\,R)$ exists if and only if  $R$ is one of the sets mentioned above. The polygon is defined uniquely by  $R$ up to affine transformations in all cases but the last one. A rectangle is defined by means of such transformation by the quotient of its side lengths.
\end{claim}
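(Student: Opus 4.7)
The plan is to invoke the earlier Claim matching polygon signatures to the underlying geometry: a polygon with signature $(A',B',R)$ lies in the Euclidean plane precisely when $R$ is parabolic, i.e.\ $\chi(R)=2$. The proof therefore reduces to (i) enumerating all admissible parabolic sets $R$, (ii) exhibiting a planar polygon for each, and (iii) verifying uniqueness up to affine equivalence, with the rectangle as the only exception.

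For the enumeration, I would analyse cases by the total number of vertices $n+k$. If $n+k=2$, the admissibility constraint forces $R=(k,k)$ or $R=(\infty,\infty)$, and only the latter has $\chi(R)=2$. If $n+k=3$, parabolicity becomes $1/r_1+1/r_2+1/r_3=1$ with each $r_i\geq 2$ or $r_i=\infty$; a short case analysis (assume $r_1\leq r_2\leq r_3$, bound $r_1\leq 3$, etc.) yields exactly the four solutions $(2,2,\infty)$, $(2,4,4)$, $(3,3,3)$, $(2,3,6)$. If $n+k=4$, we need $\sum 1/r_i=2$ with $r_i\geq 2$, forcing $r_i=2$ for every $i$ and hence $R=(2,2,2,2)$. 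For $n+k\geq 5$ the bound $\sum 1/r_i\leq (n+k)/2<n+k-2$ rules out further solutions. This produces exactly the six sets in the statement.

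For existence, I would exhibit an explicit planar realization in each case: an infinite strip bounded by two parallel lines for $(\infty,\infty)$; a half-strip, obtained by cutting a strip perpendicularly, with two right angles and one vertex at infinity, for $(2,2,\infty)$; the right isosceles triangle for $(2,2,4)$; the equilateral triangle for $(3,3,3)$; half of an equilateral triangle (a $30$--$60$--$90$ right triangle) for $(2,3,6)$; and any rectangle for $(2,2,2,2)$. In every case the prescribed angles are immediately verified by inspection.

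For uniqueness, in every case but the rectangle the polygon's shape is determined once its angles are fixed: two triangles with the same ordered triple of angles are similar, and any two strips (resp.\ any two half-strips) are similar. Since similarities are affine transformations, each such polygon is unique up to affine equivalence. For the rectangle, specifying the four right angles still leaves the ratio of adjacent side lengths free, and this ratio is a similarity invariant which can take any positive value, so rectangles are classified up to affine equivalence by this quotient. The only substantive step is the case-by-case enumeration of parabolic $R$; existence and uniqueness reduce to elementary Euclidean geometry.
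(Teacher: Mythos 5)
Your proof is correct and follows the paper's own strategy exactly --- enumerate the solutions of $\chi(R)=2$, exhibit a planar polygon for each, and classify up to (complex-)affine equivalence --- while actually supplying the enumeration and uniqueness details that the paper's proof leaves to the reader. One cosmetic point: your enumeration correctly produces $(2,4,4)$, so the ``half of a square'' (the right isosceles triangle, with angles $\pi/2,\pi/4,\pi/4$) should be labelled $(2,4,4)$ rather than $(2,2,4)$, a typo you inherited from the paper's list in \S 3.2.
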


\begin{proof} For the proof it is enough to find all the solutions of the equation $\chi=2$, exhibit examples of the required polygons and classify these polygons up to affine transformations. Here we consider only examples: in case~1) it is a strip between two parallel lines. In case 2) it is the triangle obtained by cutting the strip from the first example by a line perpendicular to its sides. In other cases these are the triangles and quadrilaterals appearing in the names of the cases.
\end{proof}

By comparing the lists in s.~3.1--3.2 we see that the groups of types 2)--3) and 6)--8) are subgroups of index two in the groups generated by reflections in a two- or three-gon with the same set $R$. For a group of type 5) this is so if $\lambda\in \Bbb R$:  in this case the covering is given by the inverse of the elliptic Schwartz-Christoffel integral $\int \frac{dz}{\sqrt{p(z)}}$ with $p(z)=z(z-1)(z-\lambda)$. This integral transforms the upper half-plane into a rectangle.

\subsection { Coverings with Parabolic Signatures}

Let a parabolic signature $(A,\,B,\,R)$ be fixed. The universal covering with this signature consists of the line $\Bbb C^1$ equipped with a discrete group of its transformations $G$,  the factorization mapping $\Bbb C^1 \rightarrow \Bbb C^1/G$ and isomorphism $C^1/G \rightarrow S$. If $\# A\cup B\leq 3$ then the position of the points $A\cup B$ has no significance, as any configuration of at most three points on the sphere can be transformed to any other configuration by an automorphism of the sphere. In this case we know the group $G$ and its geometric description.

Consider the case of signature $A= \{a_1, a_2, a_3, a_4 \},\, R= \{2, 2, 2, 2\}$. If the points of the set $A$ lie on a circle, they can be transformed into points $0,1,\infty,\lambda$ with real $\lambda$. For such points we have described the universal covering above as the inverse of the elliptic Schwartz-Christoffel integral of the form $\int \frac{dz}{p(z)}$, $p(z)=z(z-1)(z-\lambda)$.  If the points of the set  $A$ don't lie on a circle, the universal covering can be described as follows. We can assume that  $\infty \notin A$. In this case the universal covering $I^{-1}:\Bbb C^1\to S$ is given by the inverse of the integral $I=\int \frac{dz}{\sqrt{p(z)}}$ with $p(z)=(z-a_1)(z-a_2)(z-a_3)(z-a_4)$. The groupo of deck transformations of this covering is generated by shifts by the elements of the lattice of periods $\Lambda_2$ of the integral $I$ and by multiplication by $(-1)$. The quotient of $\Bbb C^1$ by the group of translations from $\Lambda_2$ is a torus, which is a two-sheeted branched covering over the sphere with branching points $A$.

We now consider the general case of coverings with parabolic signature. The commutator of the group of all automorphisms of the complex line consists of all the translations. The translations are the only transformations that have no fixed points.

To a given parabolic signature one associates the universal covering with this signature and a group $G$ of automorphisms of the line acting as the group of the deck transformations of the covering. Covering with this signature are in one to one correspondence with conjugacy classes of subgroups of $G$, whose intersection $H$ consists only of translations. The monodromy group of this covering $\pi_1 :Y\rightarrow S$ is isomorphic to the group $G/H$ and is determined by the signature up to a quotient by a subgroup $H$ in the commutator of the group $G$.

\subsection {Equations with Parabolic Signatures}

\begin{theorem} A linear differential equation of Fuchs type with parabolic signature can be solved by quadratures. Its monodromy group is a factor group of a group  $G$ a commutative normal subgroup, where the group $G$ depends only on the signature.
\end{theorem}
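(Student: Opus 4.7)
The plan is to read off both assertions directly from the description of the universal covering with parabolic signature given in \S 3.3, combined with the classical Picard--Vessiot principle that a Fuchsian equation with solvable monodromy is integrable in quadratures.

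First I would invoke the classification of \S 3.1: for the given parabolic signature $(A,\,B,\,R)$ the universal covering has total space $\Bbb C^1$ and its deck transformation group $G$ is one of the discrete affine groups 2), 3), 5)--8) of that list. In each case the affine conjugacy class of $G$ is pinned down by $(A,B,R)$, so $G$ depends only on the signature. By Example 2, the Riemann surface $Y$ of a generic solution of the Fuchsian equation is a covering with signature $(A,\,B,\,R)$; by the Corollary following Theorem~\ref{th:th3}, $Y$ corresponds to an admissible subgroup $F\subset G$, and its monodromy group is $G/H$, where $H=\bigcap_i F_i$ is the intersection of the conjugates of $F$ and is a free normal subgroup of $G$.

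Now I would use freeness of the $H$-action on $\Bbb C^1$. The only fixed-point free affine transformations of the line are the translations, so $H$ is contained in the translation subgroup $T\subset G$. But $T$ is a subgroup of the additive group $(\Bbb C,+)$, hence commutative; and $T$ is normal in the full affine group, hence in $G$; therefore $H$ is commutative and normal in $G$. This yields the structural assertion: the monodromy group is a quotient of the signature-determined group $G$ by a commutative normal subgroup.

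For the integrability assertion I would observe that $G$ itself is metabelian, since $T$ is commutative and the quotient $G/T$ embeds into $\Bbb C^*$, which is also commutative; therefore $G/H$ is solvable. The proof is then closed by the classical Picard--Vessiot / Liouville theorem: a linear Fuchsian equation whose monodromy group is solvable can be integrated in quadratures. The only step drawing on input beyond the preceding sections is this last implication from differential Galois theory, used here exactly as the analogous solvability-in-radicals statement was used in the spherical case; everything else is immediate from \S 3.1 and the corollary of \S 1.2.
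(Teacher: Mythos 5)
Your proposal is correct and follows essentially the same route the paper takes: \S 3.3 already records that a covering with parabolic signature corresponds to a subgroup of the affine group $G$ of the universal covering whose normal core $H$ consists only of translations (equivalently, lies in the commutator subgroup of the affine group), so the monodromy group is $G/H$ with $H$ commutative and normal, and solvability by quadratures then follows from solvability of the metabelian group $G$ via the standard differential Galois argument. The paper leaves these last steps implicit, but your filling-in matches its intended argument.
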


\begin{theorem} An algebraic function with parabolic signature is expressible in radicals.  Its monodromy group is a factor group of a group  $G$ depending only on the signature by  a commutative normal subgroup.
\end{theorem}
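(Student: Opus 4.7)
The plan is to combine the structural description of coverings with parabolic signature from Section 3.3 with the classical fact that an algebraic function is expressible in radicals exactly when its monodromy group is solvable.

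Since an algebraic function has only finitely many branches and all its local monodromies are of finite order (Example 1), the associated covering satisfies $B=\emptyset$ and has finite degree. By the parabolic hypothesis, Section 3.3 identifies the universal covering with $(\Bbb C^1, G)$, where $G$ is one of the discrete groups of affine transformations classified in Section 3.1. I would first check that $G$ is determined by the signature $(A,R)$ alone: for the cases $\#A\leq 3$ this follows from Claim 4 together with the classification list (cases 6), 7), 8), and, when an $\infty$ appears, case 3) of Section 3.1); for the remaining four-point signature with $R=(2,2,2,2)$, the cross-ratio of $A$ supplies the modular parameter $\lambda$ that determines $G$ via the elliptic Schwarz--Christoffel description given in Section 3.3.

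Next, I would apply Corollary 1: the covering corresponds to an admissible subgroup $F\subset G$ whose intersection $H=\bigcap_i F_i$ with its conjugates is free and normal in $G$, and the monodromy group is $G/H$. An affine transformation acts freely on $\Bbb C^1$ iff it is a translation, so $H$ is contained in the translation subgroup of $G$; in particular $H$ is a commutative normal subgroup of $G$. This already establishes the structural half of the theorem: the monodromy group is $G/H$ with $G$ depending only on the signature and $H$ commutative normal.

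For the radicals half, I would observe that the full affine group has the translation subgroup $\Bbb C$ as an abelian normal subgroup with abelian quotient $\Bbb C^*$, so every affine $G$ is metabelian, and hence so is $G/H$. The classical Galois correspondence identifies the Galois group of an algebraic function over the field of rational functions on $S$ with its monodromy group, and solvability of this group is equivalent to expressibility in radicals; thus solvability of $G/H$ yields the desired representation. The main obstacle is the first step -- a case-by-case verification that the discrete parameters ($k$, $c$, $\lambda$) in the classification of affine $G$'s are truly recoverable from the signature $(A,R)$; once this is done, the remainder is a direct application of results already established in the paper.
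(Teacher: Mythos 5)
Your argument is correct and follows essentially the same route the paper takes: the monodromy group is $G/H$ with $H$ a free normal subgroup, hence consisting of translations and therefore commutative, while $G$ is the (metabelian, hence solvable) deck group of the universal covering with the given signature, so $G/H$ is solvable and the function is expressible in radicals. The only place you work harder than necessary is the case-by-case recovery of the discrete parameters of $G$ from the signature; this is already guaranteed by the uniqueness of the universal covering with a given signature (Theorems 1 and 3).
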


\begin{example} Coverings with the set of orders $(\infty, \infty)$ are uniformized by a group of type 1). Equations of Euler's type $y^{(n)}+a_1y^{(n-1)}x^{-1}+\dots+a_n yx^{-n}=0$ are of this kind.
\end{example}

\begin{example} Coverings with the set of orders $(2, 2, \infty)$ are uniformized by a group of type 2). Equations of the form $$\sum_{i=0}^n a_i\left((1-x^2)\frac{d^2}{d\,x^2}-x\frac{d}{d\,x}\right)^iy=0$$ have this signature. By means of a change of variables $x=\cos z$ such equation can be reduced to an equation with constant coefficients $\sum_{i=0}^n a_i\frac{d^{2i}y}{d\,z^{2i}}=0$. Hence the solutions of this equation are of the form $$y(x)=\sum_j p_j(\arccos x) \cos(\alpha_j \arccos x)+q_j(\arccos x) \sin(\alpha_j \arccos x),$$ with $p_j,q_j$ --- polynomials. In particular all the (multivalued) Chebyshev functions $f_\alpha$ defined by the property $f_\alpha(\frac{x+x^{-1}})=\frac{x^\alpha+x^{-\alpha}}{2}$ are solutions of such equations. For integer $\alpha$ these are Chebyshev polynomials, for $\alpha=1/n$ with integer $n$ these are the inverses of Chebyshev polynomials.
\end{example}

\begin{example}
If $p_4(z)$ is a fourth degree polynomial with roots $z_1,\ldots,z_4$, then the elliptic integral $y(z)=\int\limits_{z_0}^{z}\frac{d\,z}{\sqrt{p_4(z)}}$ has signature $(z_1,\ldots,z_4;2,2,2,2)$ and it is a solution of the Fuchs type differential equation $y''+\frac{1}{2}\frac{p_4'(z)}{p_4(z)}y'=0$ with the same signature.
\end{example}

\section{ Functions with Non-Hyperbolic Signatures in Other Contexts}

Algebraic functions with elliptic signatures are classical objects. For instance he first part of Klein's book [2]. Algebraic functions with non-hyperbolic signatures play a central role in the works of Ritt on rational mappings of prime degree invertible in radicals (see [1], [3], [4]). The reason for their appearance in these works is as follows.

By a result of Galois, an irreducible equation of prime degree $p$ can be solved in radicals if and only if its Galois group is a subgroup of the metacyclic permutation group $\{x\to a x + b \mod p: a\not\equiv 0 \mod p\}$. A permutation $x\to ax+b \mod p$ splits as a product of $1+\frac{p-1}{n}$ disjoint cycles, where $n$ is the order of the element $a$ in the group $Z_p$ (we use the convention that the order of the identity element is $\infty$). According to Riemann-Hurwitz formula, for a function with such monodromy group the formula $$2=2p-\sum {p-1-\frac{p-1}{n_i}}$$ holds, where $n_i$ are the branching order, or $\infty$, if the branching is of order $p$. In particular the inequality $\sum \frac{1}{\widetilde{n_i}} \geq 2$ on the branching orders $\widetilde{n_i}$ holds. This means that the signature of such rational functions is non-hyperbolic.

In dynamics Lattes maps are studied as examples of rational mappings with exceptional (usually exceptionally simple) dynamics --- these are rational mappings induced by an endomorphism of an elliptic curve (see [5],[6]). These mappings have parabolic signature (but they don't exhaust all the examples of rational mappings with parabolic signatures: to describe all such examples one has to include all the mappings of a sphere to itself induced by a homomorphism between two different elliptic curves). Lattes maps have provided the first examples of rational mappings with Julia set equal to the whole Riemann sphere.

\section{Hyperbolic Case}

Let $R$ be a signature of an algebraic function. If the universal covering with this signature is the Riemann sphere or the complex line,  then the monodromy group of any algebraic function with signature $R$ can be described explicitly: it contains a normal subgroup which is an abelian group with at most two generators, and the quotient by this group is a finite group from a finite list of groups associated with the given signature. In contrast, if the universal covering with signature $R$ is the hyperbolic plane then the monodromy group of an algebraic function with such signature can be arbitrarily complicated as the next theorem shows:

\begin{theorem} Let $R$ be a signature of an algebraic function and let the universal covering with signature $R$ have the hyperbolic plane as its total space. Let $G$ be an arbitrary finite group. There exists a covering with signature $R$ and monodromy group $H$, containing a subgroup $H_1$ which has a normal subgroup such that the quotient of $H_1$ by it is isomorphic to $G$ (i.e. the monodromy group $H$ has a subquotient isomorphic to $G$). 
\end{theorem}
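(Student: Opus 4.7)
The plan is to produce a free normal subgroup $F\triangleleft G_\Delta$ of finite index in the deck transformation group of the universal covering $\pi:Z\to S$ with signature $R$, chosen so that the resulting finite covering $Z/F\to S$ of signature $R$ has monodromy group $H=G_\Delta/F$ containing the prescribed $G$ as a subquotient. The required subgroup $H_1\le H$ will be the image of an intermediate torsion-free subgroup of $G_\Delta$ that admits an explicit surjection onto $G$.

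First I would apply Selberg's lemma to the finitely generated linear group $G_\Delta\subset\mathrm{PSL}_2(\mathbb{R})$ and intersect the resulting torsion-free finite-index subgroup with all of its $G_\Delta$-conjugates, obtaining a torsion-free normal subgroup $N\triangleleft G_\Delta$ of finite index. Because the signature is that of an algebraic function, $G_\Delta$ contains no parabolic elements, so the quotient $Z/N$ is a closed hyperbolic surface $\Sigma_g$ of some genus $g\ge 2$. Using the standard surjection
\[
\pi_1(\Sigma_g)=\langle a_1,b_1,\ldots,a_g,b_g\mid [a_1,b_1]\cdots[a_g,b_g]\rangle\twoheadrightarrow F_g,\qquad b_i\mapsto 1,
\]
together with the fact that $F_g$ surjects onto any $g$-generated group, I get a surjection $\varphi:N\twoheadrightarrow G$ provided $g$ is at least $d(G)$, the minimal number of generators of $G$. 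If $g<d(G)$, I would first pass to a $G_\Delta$-normal torsion-free subgroup $N'\triangleleft N$ of sufficiently large finite index: by Riemann--Hurwitz the corresponding closed surface has genus $1+[N:N'](g-1)$, which grows without bound with $[N:N']$, and the normal core of $N'$ in $G_\Delta$ remains torsion-free and of finite index in $G_\Delta$. Rename this subgroup $N$, and let $K=\ker\varphi$.

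Now let $F=\bigcap_{\gamma\in G_\Delta}\gamma K\gamma^{-1}$ be the normal core of $K$ in $G_\Delta$. Only finitely many distinct conjugates appear since $N\triangleleft G_\Delta$ has finite index and $K$ has index $|G|$ in $N$, so $F$ has finite index in $G_\Delta$; it is torsion-free as a subgroup of $N$ and normal in $G_\Delta$ by construction, hence a free normal subgroup in the sense of Section~1. By the Corollary the map $Z\to Z/F\to S$ is a normal covering with signature $R$: the stabilizer in $G_\Delta$ of each branch point is cyclic of order $r_i$ and injects into $G_\Delta/F$ by torsion-freeness of $F$, so the branching orders are preserved. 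Its monodromy group is $H=G_\Delta/F$, and setting $H_1:=N/F\le H$ gives a subgroup with normal subgroup $K/F$ satisfying $H_1/(K/F)\cong N/K\cong G$, exhibiting $G$ as the desired subquotient.

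The delicate step is the middle one: producing a torsion-free normal subgroup $N$ of finite index in $G_\Delta$ whose natural surface-group structure is large enough to surject onto the prescribed $G$. This is where hyperbolicity is essential---in the elliptic and parabolic cases the analogous groups are respectively finite and virtually abelian, so only very restricted quotients can occur---and I would handle it by combining Selberg's lemma (to kill torsion), Riemann--Hurwitz (to grow the genus at will under finite covers), and the surjection $\pi_1(\Sigma_g)\twoheadrightarrow F_g$. Everything else, including the normal-core construction and the identification of the subquotient via the third isomorphism theorem, is routine.
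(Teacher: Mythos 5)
Your argument is correct, and its engine is the same as the paper's: produce a closed surface of large genus lying over $S$ with the prescribed branching, surject its fundamental group onto a free group of large rank and hence onto $G$, and pass to the kernel. The differences are in how you launch and how you land. To get the initial torsion-free finite-index subgroup $N\triangleleft G_\Delta$ you invoke Selberg's lemma; the paper gets this for free from the hypothesis that $R$ is the signature of an algebraic function: the normalization $Y$ of that covering is already a closed surface of genus $\ge 2$ finitely covering $S$ with signature $R$, and its fundamental group is the required torsion-free finite-index subgroup. So Selberg is correct but unnecessary here (it would be needed only if one wanted the statement for an abstract hyperbolic signature not assumed to be realized by an algebraic function). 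On the other end, you take the normal core of $K=\ker(N\twoheadrightarrow G)$ in $G_\Delta$ to produce a \emph{normal} covering $Z/F\to S$ and identify $H=G_\Delta/F$, $H_1=N/F$ explicitly, checking via torsion-freeness of $F$ that the branching orders are preserved; the paper stops at the (not necessarily normal) covering $Y_2\to S$ and observes that its monodromy group contains the subgroup of fiber permutations coming from loops liftable to $Y_1$, which surjects onto $G$. Your finish is more precise and buys a normal covering with a cleanly identified monodromy group; the paper's is shorter. Your intermediate genus-growing step via Riemann--Hurwitz matches the paper's replacement of $Y$ by an unbranched cover $Y_1$ of genus $\ge k$. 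One small remark: your observation that $G_\Delta$ has no parabolics (so that $Z/N$ is compact) is exactly the point where the ``algebraic function'' hypothesis, i.e.\ $B=\emptyset$ and all orders finite, enters, and it is worth making that dependence explicit as you did.
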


\begin{proof} If $\pi: Y \to S$ is the normalization of the covering associated to an algebraic function with signature $R$, then the universal covering $Z\to S$ with signature $R$ can be obtained as the composition of $\pi$ and the universal (unbranched) covering $Z \to Y$. In particular if  $Z$ is the hyperbolic plane, then  $Y$ is topologically a sphere with at least two handles.

Fix a representation of the group $G$ as a factor group of a free group on $k$ generators. Replace the covering $\pi:Y\to S$ by a covering $\pi_1:Y_1\to S$ where $Y_1$ is an unbranched covering of $Y$ and has topological type of a sphere with at least  $k$ handles. The fundamental group of the surface $Y_1$ admits a homomorphism onto the free group with $k$ generators, and hence onto $G$. Let $\pi_1:Y_2\to Y$ be the unbranched covering associated to the kernel of this homomorphism. Then the composition $\pi\circ \pi_1: Y_2\to S$ is a covering with signature $R$, whose monodromy group contains a subgroup admitting a mapping onto $G$ (more precisely it is the subgroup of permutations of the fiber that correspond to loops in the base space that can be lifted to loops in $Y_1$).

\end{proof}

\end{document}